\begin{document}

\title[The real Hurwitz continued fraction expansion]{The Hurwitz continued fraction expansion as applied to real numbers}

\authordavid

\begin{Abstract}
Hurwitz (1887) defined a continued fraction algorithm for complex numbers which is better behaved in many respects than a more ``natural'' extension of the classical continued fraction algorithm to the complex plane would be. Although the Hurwitz complex continued fraction algorithm is not ``reducible'' to another complex continued fraction algorithm, over the reals the story is different. In this note we make clear the relation between the restriction of Hurwitz's algorithm to the real numbers and the classical continued fraction algorithm. As an application we reprove the main result of Choudhuri and Dani (2015).
\end{Abstract}

\maketitle

\section{Hurwitz's algorithm}
Let $x$ be a complex number such that $x\notin \Q(i)$. The \emph{(positive) Hurwitz continued fraction expansion} of $x$ (see \cite{Hensley_Hurwitz,Hensley_book,Hurwitz_CF}) is defined to be the expression
\begin{equation}
\label{posHurwitz}
a_0 + \cfrac{1}{a_1 + \cfrac{1}{a_2 + \ddots}},
\end{equation}
where the Gaussian integers $(a_n)_0^\infty$ (the \emph{partial quotients}) and the complex numbers $(x_n)_0^\infty$ are chosen recursively according to the \emph{Hurwitz algorithm}:
\begin{itemize}
\item $x_0 = x$.
\item If $x_n$ is defined, then $a_n$ is the Gaussian integer closest to $x_n$, which we denote by $[x_n]$.\Footnote{The tiebreaking mechanism is not relevant for the purposes of this paper, but for the sake of definiteness let us (agreeing with \cite{Hensley_Hurwitz}) set $[x] = [\Re x] + i [\Im x]$, where $[t]$ denotes the integer nearest to $t\in\R$, rounded down in the case of a tie.}
\item If $x_n$ and $a_n$ are both defined, then $x_{n + 1} = 1/(x_n - a_n)$.
\end{itemize}
It is not hard to see that the Hurwitz continued fraction expansion of $x$ always converges to $x$, and in fact the corresponding partial quotients are in some sense the ``best approximations possible'' \cite[Theorem 1]{Hensley_Hurwitz}.

Some authors \cite{ChoudhuriDani, KatokUgarcovici} also consider the \emph{negative Hurwitz continued fraction expansion} of a number $x$, which is the expression
\[
\w a_0 - \cfrac{1}{\w a_1 - \cfrac{1}{\w a_2 - \ddots}} = a_0 - \cfrac{1}{-a_1 - \cfrac{1}{a_2 - \ddots}},
\]
where $(a_n)_0^\infty$ are defined in the same way as in the positive Hurwitz continued fraction expansion, and $\w a_n = (-1)^n a_n$. Note that by the identity
\begin{equation}
\label{negative}
x + \frac{1}{y + z} = x - \frac{1}{-y - z},
\end{equation}
the convergents of the negative Hurwitz continued fraction expansion are the same as the convergents of the positive Hurwitz continued fraction expansion. Thus for many purposes, it is not necessary to distinguish between the positive and negative Hurwitz expansions.

In this note, we consider the restriction of the Hurwitz algorithm to the real line. In this case, it is clear that the numbers $(x_n)_0^\infty$ and $(a_n)_0^\infty$ will all be real. Moreover, unlike the case of the complex Hurwitz expansion, it is possible to say exactly when a sequence $(a_n)_0^\infty$ is the sequence of partial quotients of some real number:
\begin{proposition}\Footnote{This proposition is not original; the wording of \cite{ChoudhuriDani} seems to suggest that it was proven in the difficult-to-find \cite{KatokUgarcovici}.}
\label{propositioncharacterization}
For a sequence of integers $(a_n)_0^\infty$, the following are equivalent:
\begin{itemize}
\item[(A)] The expression \eqref{posHurwitz} is the Hurwitz continued fraction expansion of some (irrational) real number.
\item[(B)] For all $n\geq 1$, we have $|a_n| \geq 2$, with $a_n a_{n + 1} > 0$ if equality holds.
\item[(C)] For all $n\geq 1$, we have $|\w a_n| \geq 2$, with $\w a_n \w a_{n + 1} < 0$ if equality holds.
\end{itemize}
\end{proposition}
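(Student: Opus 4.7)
The plan is to prove the two substantive equivalences (A)$\Leftrightarrow$(B), since (B)$\Leftrightarrow$(C) is pure bookkeeping: from $\w a_n = (-1)^n a_n$ we have $|\w a_n| = |a_n|$ and $\w a_n \w a_{n+1} = -a_n a_{n+1}$, so each sign condition is the other with the inequality reversed. I would dispose of this equivalence in a single sentence.

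For (A)$\Rightarrow$(B), I would simply unwind the algorithm. Since $a_n$ is the nearest Gaussian integer to $x_n$, and since $x_n$ is real, we have $|x_n - a_n| \leq 1/2$, hence $|x_{n+1}| \geq 2$ and therefore $|a_n| \geq 2$ for every $n \geq 1$. The equality case is handled by a sign analysis: if $a_n = 2$ (the case $a_n = -2$ being symmetric), then $|x_n| \geq 2$ together with $a_n = [x_n]$ pins $x_n$ into $[2,5/2]$, with the right endpoint excluded by irrationality and the tiebreaking rule; thus $x_n - a_n > 0$, so $x_{n+1} > 0$, so $a_{n+1} > 0$, so $a_n a_{n+1} > 0$.

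For (B)$\Rightarrow$(A), the task is to construct a real number whose Hurwitz expansion is the given sequence. I would first establish convergence of the continued fraction by the standard convergents argument: from $q_n = a_n q_{n-1} + q_{n-2}$ and $|a_n| \geq 2$, an easy induction gives $|q_n|$ strictly increasing, and iterating the inequality $|q_{n+1}|/|q_n| \geq 2 - |q_{n-1}/q_n|$ yields growth fast enough that $\sum 1/|q_n q_{n-1}|$ is summable, so that the convergents $p_n/q_n$ are Cauchy with some real limit $x$. I would then define the tails $y_n$ analogously and verify, by reverse induction on the level of truncation, that for every $n \geq 1$ one has $|y_n| \geq 2$ and $\mathrm{sign}(y_n) = \mathrm{sign}(a_n)$. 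When $|a_n| \geq 3$ the inductive step is trivial because $|1/y_{n+1}| \leq 1/2$ cannot overwhelm $a_n$; the interesting case is $|a_n| = 2$, where the sign hypothesis in (B) together with the inductive sign statement forces $1/y_{n+1}$ to reinforce rather than cancel $a_n$, keeping $|y_n| \geq 2$. Once this is in hand, $|y_{n-1} - a_{n-1}| \leq 1/2$ identifies $a_{n-1}$ as the nearest integer to $y_{n-1}$, and running the Hurwitz algorithm on $x = y_0$ reproduces the original sequence.

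The main technical hurdle is the careful handling of the equality case in (B)$\Rightarrow$(A). The scenario one must exclude is $a_n = 2$ with $a_{n+1} < 0$: then $y_{n+1}$ is negative of magnitude at least $2$, so $1/y_{n+1} \in (-1/2, 0)$ and $y_n \in (3/2, 2)$, which under the tiebreaking rule would give $[y_n] = 1 \ne 2 = a_n$ and destroy the identification. The sign hypothesis in (B) is designed precisely to rule this out, and the convergence and induction arguments are routine in comparison.
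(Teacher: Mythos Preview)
Your proposal is correct and follows essentially the same route as the paper: the (A)$\Rightarrow$(B) direction is identical, and for (B)$\Rightarrow$(A) both you and the paper carry out the same reverse induction showing each tail has modulus at least $2$ with sign matching its leading partial quotient, the sign hypothesis in (B) being precisely what handles the $|a_n|=2$ case. The only cosmetic difference is that the paper obtains convergence directly from a contraction estimate on the truncated tails (namely $|x_{n,M}-x_{n,N}|\le\tfrac14|x_{n+1,M}-x_{n+1,N}|$, which falls out of $|x_{n+1,N}|\ge 2$), whereas you detour through growth of $|q_n|$; both work.
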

Obviously, (B) and (C) are reformulations of each other, so we prove (A) \iff (B):
\begin{proof}[Proof of \text{(A) \implies (B)}]
By definition, for all $n\geq 0$ we have $|x_n - a_n| \leq 1/2$ and thus $|x_{n + 1}| \geq 2$ and $|a_{n + 1}| \geq 2$. If equality holds, then $a_{n + 1}$ has the same sign as $x_{n + 1} - a_{n + 1}$, which in turn has the same sign as $a_{n + 2}$.
\end{proof}
\begin{proof}[Proof of \text{(B) \implies (A)}]
For each $n,N$ with $n\leq N$, let
\[
x_{n,N} = a_n + \cfrac{1}{\ddots + \cfrac{1}{a_N}}\,\cdot
\]
Reverse induction on $n$ shows that whenever $n\geq 1$, we have $|x_{n,N}| \geq 2$ and $a_n = [x_{n,N}]$. It follows that
\[
|x_{n,M} - x_{n,N}| \leq \frac{1}{4}|x_{n + 1,M} - x_{n + 1,N}| \;\;\text{ and }\;\; |x_{N,M} - x_{N,N}| \leq \frac{1}{2},
\]
which implies that $|x_{n,M} - x_{n,N}| \leq (1/4)^{\min(M,N) - n}$, and thus for each $n$ the limit
\[
x_n = \lim_{N\to\infty} x_{n,N} = a_n + \cfrac{1}{a_{n + 1} + \cfrac{1}{\ddots}}
\]
exists. We have $a_n = [x_n]$ and $x_{n + 1} = 1/(x_n - a_n)$, and thus \eqref{posHurwitz} is the Hurwitz continued fraction expansion of $x_0$.
\end{proof}

\section{Relation to the classical algorithm}
We now show that the restriction of the Hurwitz algorithm to the real line is in some sense ``equivalent'' to the classical continued fraction algorithm:

\begin{theorem}
\label{theorem1}
The sequence of convergents of the Hurwitz continued fraction expansion of a real number $x$ is a subsequence of the sequence of convergents of the classical continued fraction expansion of $x$. This sequence has the property that it omits no two consecutive convergents, and it also contains all rational approximants $p/q$ that satisfy the inequality $|x - p/q| \leq 1/(3q^2)$.
\end{theorem}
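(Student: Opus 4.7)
The plan is to couple the Hurwitz and classical continued fraction algorithms step by step. The key identity is the singularization
\[
(b + 1) - \frac{1}{\gamma + 1} = b + \cfrac{1}{1 + \cfrac{1}{\gamma}} \qquad (\gamma \geq 1),
\]
which expresses a single Hurwitz ``round-up'' step as two successive classical steps, the second of which has partial quotient~$1$.

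Concretely, given a Hurwitz tail $x_n$, write $x_n = b + f$ with $b = \lfloor x_n \rfloor$ and $f \in (0, 1)$. If $f \leq 1/2$, then $a_n = b$ matches the classical partial quotient of $x_n$ and the Hurwitz step coincides with a single classical step. If $f > 1/2$, then $a_n = b + 1$; setting $\gamma = f/(1 - f) \geq 1$ in the singularization identity presents $x_n$ in the form $b + 1/(1 + 1/\gamma)$, and a direct computation with the convergent recurrences shows that the Hurwitz convergent produced by this single step equals the classical convergent produced by two classical steps (with partial quotients $b$ and~$1$). By induction on the Hurwitz step index, each Hurwitz convergent $P_n/Q_n$ equals a classical convergent $p^C_{m(n)}/q^C_{m(n)}$ with $m(n) - m(n-1) \in \{1, 2\}$: the gap is $1$ in the round-down case and $2$ in the round-up case, in which case exactly one classical convergent (whose next classical partial quotient is~$1$) is skipped. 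This simultaneously yields the subsequence property and the no-two-consecutive-skips property.

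For the third claim, suppose $|x - p/q| \leq 1/(3q^2)$. By Legendre's theorem, $p/q = p^C_m/q^C_m$ is a classical convergent of $x$. From the standard error identity
\[
\left| x - \frac{p^C_m}{q^C_m} \right| = \frac{1}{q^C_m \bigl( q^C_m x^C_{m+1} + q^C_{m-1} \bigr)},
\]
one sees that if $b^C_{m+1} = 1$ then $x^C_{m+1} < 2$ and $q^C_{m-1} \leq q^C_m$, whence $q^C_m x^C_{m+1} + q^C_{m-1} < 3 q^C_m$ and hence $|x - p/q| > 1/(3 q^2)$, contradicting the hypothesis. Therefore $b^C_{m+1} \geq 2$, which by the characterization above implies that $p^C_m/q^C_m$ is not a skipped classical convergent, and so is a Hurwitz convergent.

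The main obstacle is the inductive bookkeeping. After a round-up Hurwitz step, the new Hurwitz tail is not a classical tail of $x$ but rather of the form $-(1 + x^C_\ell)$; subsequent Hurwitz steps can produce additional M\"obius-shifted tails (of the form $\pm x^C$ or $\pm(1 + x^C)$). The induction must be organized around these ``states,'' verifying in each case that the next Hurwitz partial quotient has magnitude $\geq 2$ (consistent with Proposition~\ref{propositioncharacterization}) and that the next Hurwitz convergent matches the predicted classical convergent.
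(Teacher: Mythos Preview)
Your plan is sound and rests on the same identity the paper uses, written in the paper as $\tfrac{1}{1+\frac{1}{n+y}} = 1 - \tfrac{1}{n+1+y}$, but the organization differs. Rather than coupling the two algorithms step by step from the Hurwitz side, the paper works globally from the classical side: it lets $S=\{n\ge 1: b_n=1\}$, takes the unique $S'\subseteq S$ that picks alternate elements of each maximal block of $S$ starting from the first (so that no two elements of $S'$ are consecutive and every $n\in S$ has exactly one of $n,n-1$ in $S'$), and applies the identity once at every position $n\in S'$. This produces in one stroke an expansion with all partial quotients of modulus $\ge 2$, i.e.\ the Hurwitz expansion, whose convergents are exactly the classical convergents $p^C_{n-1}/q^C_{n-1}$ with $n\notin S'$; since $S'\subseteq S$, every omitted convergent has $b_n=1$, and the error bound then goes exactly as you wrote. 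The payoff of the paper's direction is that it completely avoids the state machine you flag as the main obstacle: because the classical tails are always positive, the shifted tails $\pm(1+x^C_\ell)$ never appear. Your four-state induction does close up (in each state one Hurwitz step advances the classical index by $1$ or $2$, the skip occurring precisely when the next classical partial quotient equals $1$, and the convergent matching is carried by the M\"obius relation between the two transformation matrices), but verifying all the cases is appreciably more bookkeeping than the paper's global rewrite.
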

\begin{proof}
The key to the proof is the identity
\begin{equation}
\label{keyidentity}
\cfrac{1}{1 + \cfrac{1}{n + y}} = 1 - \frac{1}{n + 1 + y}\,,
\end{equation}
which is easily verified for all $n$ and $y$. Now let us denote the classical continued fraction expansion of a real number $x$ by
\begin{equation}
\label{classicalCF}
b_0 + \cfrac{1}{b_1 + \cfrac{1}{b_2 + \ddots}},
\end{equation}
so that $(b_n)_0^\infty$ is a sequence of integers and $b_n \geq 1$ for all $n \geq 1$. Let $S = \{n\geq 1 : b_n = 1\}$, and let $S'$ be the unique subset of $S$ with the following property:
\begin{itemize}
\item For all $n \in S$, we have either $n \in S'$ or $n - 1 \in S'$, but not both.
\end{itemize}
The set $S'$ can be constructed by taking each ``block'' of $S$ and selecting ``every other element'', starting from the first element of that block; for example, if $S = \{1,4,5,6,9,10\}$, then $S' = \{1,4,6,9\}$, since the ``blocks'' are $\{1\}$, $\{4,5,6\}$, and $\{9,10\}$.

For each $n\in S'$, in the expression \eqref{classicalCF} we replace
\[
\cfrac{1}{b_n + \cfrac{1}{b_{n + 1} + \ddots}} \;\;\;\; \text{ by } \;\;\;\; 1 -  \cfrac{1}{(b_{n + 1} + 1) + \ddots}
\]
according to \eqref{keyidentity}; this is possible since $b_n = 1$. This results in an expression of the form
\begin{equation}
\label{funnyHurwitz}
c_0 + \epsilon_0 + (-1)^{\epsilon_0} \cfrac{1}{c_1 + \epsilon_1 + (-1)^{\epsilon_1} \cfrac{1}{c_2 + \epsilon_2 + (-1)^{\epsilon_2} \cfrac{1}{\ddots}}},
\end{equation}
where $\epsilon_0,\epsilon_1,\cdots \in \{0,1\}$, and $c_n \geq 2$ for all $n\geq 1$. Here, we have used the facts that $\frac{1}{x} = 0 + (-1)^0 \frac{1}{x}$ and $1 - \frac{1}{x} = 1 + (-1)^1 \frac{1}{x}$ to represent the expressions $\frac{1}{x}$ and $1 - \frac{1}{x}$ in a uniform manner as $\epsilon + (-1)^\epsilon \frac{1}{x}$, where $\epsilon \in \{0,1\}$. Repeatedly applying the identity \eqref{negative} yields the Hurwitz expansion of $x$, so the convergents of \eqref{funnyHurwitz} are the same as the convergents of the Hurwitz expansion. But these are precisely those convergents $p_{n - 1}/q_{n - 1}$ of the classical expansion \eqref{classicalCF} such that $n\notin S'$. So the sequence of partial convergents of the Hurwitz expansion is a subsequence of the sequence of convergents of the classical expansion, which omits no two consecutive convergents (by the definition of $S'$). The omitted convergents are of the form $p_{n - 1}/q_{n - 1}$, where $n \in S'$, and these convergents satisfy
\[
\left|x - \frac{p_{n - 1}}{q_{n - 1}}\right| > \frac{1}{q_{n - 1} (q_n + q_{n - 1})} = \frac{1}{q_{n - 1} (b_n q_{n - 1} + q_{n - 2} + q_{n - 1})} > \frac{1}{(b_n + 2) q_{n - 1}^2} = \frac{1}{3 q_{n - 1}^2}
\]
(cf. \cite[Theorem 13]{Khinchin_book}). Here we have used the fact that $b_n = 1$ for all $n\in S'$. On the other hand, approximants that are not convergents of the classical expansion satisfy $|x - p/q| \geq 1/(2 q^2)$ \cite[Theorem 19]{Khinchin_book}. So all approximants that are not convergents of the Hurwitz expansion satisfy $|x - p/q| > 1/(3 q^2)$.
\end{proof}

Aside from the relation between the sequences of convergents described in Theorem \ref{theorem1}, the Hurwitz continued fraction expansion also shares the following formal similarity with the classical continued fraction expansion:
\begin{proposition}
\label{propositionrecursion}
Let $(a_n)_0^\infty$ be the sequence of Hurwitz (resp. classical) partial quotients of a real number $x$. If the sequences $(p_n)_{-2}^\infty$ and $(q_n)_{-2}^\infty$ are defined recursively via the formulas
\begin{align} \label{recursion1}
p_{-1} &= 1,\; p_{-2} = 0,& q_{-1} &= 0,\; q_{-2} = 1,\\ \label{recursion2}
p_n &= a_n p_{n - 1} + p_{n - 2},& q_n &= a_n q_{n - 1} + q_{n - 2},
\end{align}
then $(p_n/q_n)_0^\infty$ is precisely the sequence of Hurwitz (resp. classical) convergents of $x$.\Footnote{Note that in the Hurwitz case there is some ambiguity as to how to represent each convergent as a fraction ($p/q$ vs. $(-p)/(-q)$), and this proposition gives a way to resolve this ambiguity (namely to take the sequences $(p_n)_0^\infty$ and $(q_n)_0^\infty$ defined by the recursive relations). The ambiguity would be resolved in the same way if one took the expression defining the convergent and simplified it repeatedly according to the rules $(p/q)^{-1} = q/p$ and $n + p/q = (nq + p)/q$.}
\end{proposition}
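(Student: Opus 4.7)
The plan is to give a single inductive argument that covers the classical and Hurwitz cases simultaneously, since only the permitted values of $a_n$ differ between them. The key device is the auxiliary family of functions defined for $n \geq 1$ by
\[
G_n(\alpha) \;=\; a_0 + \cfrac{1}{a_1 + \cfrac{1}{\ddots + \cfrac{1}{a_{n - 1} + \cfrac{1}{\alpha}}}},
\]
together with $G_0(\alpha) = \alpha$. The $n$-th convergent of \eqref{posHurwitz} or \eqref{classicalCF} is by definition $G_n(a_n)$, so it suffices to establish
\[
G_n(\alpha) \;=\; \frac{\alpha p_{n - 1} + p_{n - 2}}{\alpha q_{n - 1} + q_{n - 2}}
\]
for every $\alpha$ for which the right-hand side is defined, and then to specialize to $\alpha = a_n$.

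I would prove this identity by induction on $n \geq 0$. The base case is immediate from the initial conditions \eqref{recursion1}: $(\alpha \cdot 1 + 0)/(\alpha \cdot 0 + 1) = \alpha = G_0(\alpha)$. For the inductive step $n \geq 1$, observe that $G_n(\alpha)$ and $G_{n - 1}(a_{n - 1} + 1/\alpha)$ unwind to the same finite continued fraction, so the induction hypothesis applied with the value $a_{n - 1} + 1/\alpha$ yields
\[
G_n(\alpha) \;=\; \frac{(a_{n - 1} + 1/\alpha) p_{n - 2} + p_{n - 3}}{(a_{n - 1} + 1/\alpha) q_{n - 2} + q_{n - 3}} \;=\; \frac{\alpha p_{n - 1} + p_{n - 2}}{\alpha q_{n - 1} + q_{n - 2}}
\]
after multiplying numerator and denominator by $\alpha$ and applying \eqref{recursion2}. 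Specializing to $\alpha = a_n$ and invoking \eqref{recursion2} once more gives $G_n(a_n) = p_n/q_n$, as desired.

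To complete the proof one must check that the denominators $\alpha q_{n - 1} + q_{n - 2}$ (in particular the $q_n$'s themselves) are nonzero, so that both sides make sense as rationals. In the classical case this is immediate, since $a_n \geq 1$ forces $q_n \geq q_{n - 1} > 0$ for all $n \geq 0$. In the Hurwitz case, Proposition \ref{propositioncharacterization} gives $|a_n| \geq 2$ for $n \geq 1$, from which a short induction establishes $|q_n| > |q_{n - 1}|$ for $n \geq 1$, so every $q_n$ is nonzero. There is no real obstacle in this proof; the one point worth flagging is the observation recorded in the footnote, namely that the recursion \eqref{recursion2} canonically resolves the $p/q$ versus $(-p)/(-q)$ ambiguity in representing Hurwitz convergents, which is implicit in the phrasing of the proposition.
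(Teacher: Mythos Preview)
Your proof is correct and is essentially the argument the paper invokes by citing \cite[Theorem~1]{Khinchin_book}: the paper simply observes that Khinchin's standard induction on the length of the finite continued fraction, with an auxiliary last entry, goes through verbatim in both the classical and Hurwitz settings. You have spelled out that induction in full and added the explicit verification that $q_n \neq 0$ in the Hurwitz case, which is the one detail not literally contained in Khinchin's classical proof.
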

\begin{proof}
The proof of \cite[Theorem 1]{Khinchin_book} is valid for both the classical and Hurwitz setups, since both use the same formal expressions for the convergents and partial quotients.
\end{proof}

However, there are differences from the classical algorithm as well. For example, while the error terms $p_n/q_n - x$ corresponding to the classical convergents always alternate in sign \cite[Theorem 4]{Khinchin_book}, the error terms corresponding to the Hurwitz convergents can be described as follows:

\begin{proposition}
If $p_n/q_n$ is the $n$th convergent of the Hurwitz algorithm, then the sign of the error term $p_n/q_n - x$ is the same as the sign of the $n$th partial quotient $\w a_{n + 1}$, i.e. $(-1)^{n + 1}$ times the sign of the $n$th partial quotient $a_{n + 1}$.
\end{proposition}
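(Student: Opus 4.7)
The plan is to derive the standard error formula and then analyze the signs of its factors. First, Proposition \ref{propositionrecursion} extends in the usual way to allow the complete quotient $x_{n+1}$ in place of $a_{n+1}$ as the last ``partial quotient'', giving
\[
x = \frac{x_{n+1} p_n + p_{n-1}}{x_{n+1} q_n + q_{n-1}},
\]
by induction on $n$ using $x_k = a_k + 1/x_{k+1}$. Combined with the determinant identity $p_n q_{n-1} - p_{n-1} q_n = (-1)^{n+1}$ (which follows by the same telescoping argument from \eqref{recursion1}--\eqref{recursion2} that works in the classical case), subtracting $p_n/q_n$ from both sides yields
\[
\frac{p_n}{q_n} - x = \frac{(-1)^{n+1}}{q_n(x_{n+1} q_n + q_{n-1})}.
\]
Factoring $q_n(x_{n+1} q_n + q_{n-1}) = q_n^2(x_{n+1} + q_{n-1}/q_n)$, the claim reduces to showing that $x_{n+1} + q_{n-1}/q_n$ has the same sign as $a_{n+1}$.

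Two elementary ingredients suffice. First, the nearest-integer condition $|x_{n+1} - a_{n+1}| \le 1/2$ together with the bound $|a_{n+1}| \ge 2$ (Proposition \ref{propositioncharacterization}(B)) gives $\operatorname{sgn}(x_{n+1}) = \operatorname{sgn}(a_{n+1})$ and $|x_{n+1}| \ge 3/2$. Second, I would show by induction that $|q_n| > |q_{n-1}|$ for every $n \ge 0$: the base cases $n=0,1$ are immediate from $|q_0| = 1 > 0 = |q_{-1}|$ and $|q_1| = |a_1| \ge 2$, and for $n \ge 2$ the recursion together with $|a_n| \ge 2$ and the inductive hypothesis give
\[
|q_n| \ge |a_n|\,|q_{n-1}| - |q_{n-2}| \ge 2|q_{n-1}| - |q_{n-2}| > |q_{n-1}|.
\]
These combine to give $|q_{n-1}/q_n| < 1 < |x_{n+1}|$, so $\operatorname{sgn}(x_{n+1} + q_{n-1}/q_n) = \operatorname{sgn}(x_{n+1}) = \operatorname{sgn}(a_{n+1})$; multiplying by $(-1)^{n+1}$ yields $\operatorname{sgn}(p_n/q_n - x) = (-1)^{n+1}\operatorname{sgn}(a_{n+1}) = \operatorname{sgn}(\w a_{n+1})$, as required.

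The only mildly subtle point is the monotonicity of $|q_n|$: unlike the classical case, the $q_n$ themselves can change sign, so one must track absolute values rather than the values themselves. Everything else is a direct transcription of the classical error calculation.
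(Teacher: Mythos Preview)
Your proof is correct, but it takes a different route from the paper's. The paper argues directly by monotonicity: writing
\[
\frac{p_n}{q_n} = a_0 + \cfrac{1}{\ddots + \cfrac{1}{a_n + 0}}
\quad\text{versus}\quad
x = a_0 + \cfrac{1}{\ddots + \cfrac{1}{a_n + 1/x_{n+1}}},
\]
it observes that each layer $t\mapsto a_k + 1/t$ reverses order (both intermediate values lie on the same side of $0$, as established in the proof of Proposition~\ref{propositioncharacterization}), so passing through $n$ such layers the sign of the outer difference is $(-1)^n$ times that of the inner difference $-1/x_{n+1}$, and $\operatorname{sgn}(x_{n+1})=\operatorname{sgn}(a_{n+1})$. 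Your argument instead derives the explicit error formula $p_n/q_n - x = (-1)^{n+1}\big/\big(q_n^2(x_{n+1}+q_{n-1}/q_n)\big)$ via the determinant identity, and then controls the sign of the denominator by combining $|x_{n+1}|\ge 3/2$ with the inductive bound $|q_{n-1}/q_n|<1$. The paper's approach is shorter and needs no auxiliary estimates on the $q_n$; your approach is more quantitative and essentially pre-computes the identity that the paper only later invokes in Proposition~\ref{propositionlagrangebound}, so nothing is wasted. Note also that the paper subsequently proves the sharper bound $|q_n|>\phi|q_{n-1}|$ (Proposition~\ref{propositionexpanding}), which would supersede your monotonicity induction if the order of presentation were reversed.
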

\begin{proof}
Since $x_{n + 1}$ and $a_{n + 1}$ share the same sign, comparing
\[
\frac{p_n}{q_n} = a_0 + \cfrac{1}{\ddots + \cfrac{1}{a_n + 0}} \;\;\;\;\text{ vs. }\;\;\;\; x = a_0 + \cfrac{1}{\ddots + \cfrac{1}{a_n + \cfrac{1}{x_{n + 1}}}}
\]
yields the desired conclusion.
\end{proof}

Another difference between the Hurwitz and classical expansions is that the Hurwitz expansion yields a faster rate of exponential growth for the denominators of the convergents. In the classical setup, the sequence $(q_n)_0^\infty$ always satisfies $\liminf_{n\to\infty} q_{n + 2}/q_n \geq 2 > 1$,\Footnote{In general, $q_{n + k}/q_n$ is always at least the $(k + 1)$st Fibonacci number. This is because if $F_k$ denotes the $k$th Fibonacci number, then an induction argument shows that $q_{n + k} = F_{k + 1} q_n + F_k q_{n - 1}$.} but it is possible that $\liminf_{n\to\infty} q_{n + 1}/q_n = 1$. By contrast:
\begin{proposition}
\label{propositionexpanding}
If $p_n/q_n$ denotes the $n$th convergent of the Hurwitz algorithm, then for all $n\geq 1$,
\begin{equation}
\label{qnanbounds}
(|a_n| - (2 - \phi))|q_{n - 1}| < |q_n| < (|a_n| + (\phi - 1))|q_{n - 1}|,
\end{equation}
where $\phi$ denotes the golden ratio. In particular,
\begin{equation}
\label{expanding}
|q_n| > \phi |q_{n - 1}|.
\end{equation}
\end{proposition}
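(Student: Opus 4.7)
The plan is to work with the ratios $\rho_n := q_n/q_{n-1}$ for $n \geq 1$, which by \eqref{recursion2} satisfy $\rho_n = a_n + 1/\rho_{n-1}$ with the convention $1/\rho_0 = 0$ coming from $q_{-1} = 0$ and $q_0 = 1$. In these terms, the bound \eqref{qnanbounds} reads $|a_n| - (2-\phi) < |\rho_n| < |a_n| + (\phi-1)$. Throughout I would use the golden ratio identities $\phi - 1 = 1/\phi$ and $2 - \phi = 1/\phi^2$.

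I would prove by induction on $n \geq 1$ the following combined statement: $\rho_n$ and $a_n$ have the same sign, and the two-sided bound on $|\rho_n|$ holds. The base case $n = 1$ is immediate since $\rho_1 = a_1$. For the inductive step, the lower bound from the inductive hypothesis gives $|\rho_{n-1}| > |a_{n-1}| - (2-\phi) \geq \phi$, so $1/|\rho_{n-1}| < \phi - 1 < |a_n|$. Together with $\rho_n = a_n + 1/\rho_{n-1}$, this at once yields the sign statement for $\rho_n$ and the upper bound $|\rho_n| < |a_n| + (\phi-1)$.

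The main step is the lower bound, which I would establish by splitting on the sign of the product $a_{n-1} a_n$. If $a_{n-1} a_n > 0$, the inductive sign statement makes the two summands of $\rho_n$ agree in sign, so $|\rho_n| = |a_n| + 1/|\rho_{n-1}| > |a_n|$, comfortably beyond the desired lower bound. The interesting case is $a_{n-1} a_n < 0$, where cancellation gives $|\rho_n| = |a_n| - 1/|\rho_{n-1}|$. Here I would invoke Proposition \ref{propositioncharacterization}(B) at index $n-1$ to rule out $|a_{n-1}| = 2$ (which would force $a_{n-1} a_n > 0$), concluding that $|a_{n-1}| \geq 3$. The inductive lower bound then gives $|\rho_{n-1}| > 3 - (2-\phi) = \phi^2$, hence $1/|\rho_{n-1}| < 2 - \phi$, which produces $|\rho_n| > |a_n| - (2-\phi)$, as required.

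Finally, \eqref{expanding} is an immediate corollary of the lower bound in \eqref{qnanbounds}, since $|a_n| - (2-\phi) \geq \phi$. The crux of the argument, and the only subtle point, is the cancellation case: the proof hinges on condition (B) providing exactly the slack $|a_{n-1}| \geq 3$ needed to push $|\rho_{n-1}|$ past the critical threshold $\phi^2 = 1/(2-\phi)$. Carrying all three assertions (sign, upper bound, lower bound) together in a single induction is what makes the bookkeeping go through cleanly.
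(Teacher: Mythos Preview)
Your proof is correct and follows essentially the same approach as the paper's own proof: the paper works with the reciprocal ratio $y_n = q_{n-1}/q_n$ and phrases its inductive hypothesis as $-(2-\phi) < y_{n-1}\operatorname{sgn}(a_n) < \phi - 1$, but the inductive structure, the golden-ratio identities, and the crucial invocation of Proposition~\ref{propositioncharacterization}(B) to dispose of the sign-cancellation case are identical to yours. Your packaging (carrying the sign statement and both bounds on $|\rho_n|$ together) makes the connection to \eqref{qnanbounds} slightly more direct, but the two arguments are interchangeable.
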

\begin{proof}
For each $n\geq 1$ let $y_n = q_{n - 1}/q_n$. Then by \eqref{recursion1} and \eqref{recursion2}, we have $y_0 = 0$, and for all $n\geq 1$ we have
\[
y_n = \frac{1}{a_n + y_{n - 1}}\cdot
\]
By induction, $|y_{n - 1}| < 1$ for all $n\geq 1$, so $y_n$ shares the same sign as $a_n$. We will prove by induction that
\begin{equation}
\label{IH}
-(2 - \phi) < y_{n - 1} \sgn(a_n) < \phi - 1
\end{equation}
for all $n$. The base case $n = 1$ is trivial, so suppose that \eqref{IH} holds for some $n\geq 1$. Then
\begin{align*}
|y_n| &= \frac{1}{|a_n| + y_{n - 1}\sgn(a_n)} < \frac{1}{|a_n| - (2 - \phi)}
\leq \begin{cases}
\frac{1}{\phi} & |a_n| = 2\\
\frac{1}{1 + \phi} & |a_n| \geq 3
\end{cases}
=\begin{cases}
\phi - 1 & |a_n| = 2\\
2 - \phi & |a_n| \geq 3
\end{cases}\cdot
\end{align*}
To complete the inductive step, we need to show that if $|a_n| = 2$, then $y_n \sgn(a_{n + 1}) > 0$. But this follows from Proposition \ref{propositioncharacterization}, since $\sgn(y_n) = \sgn(a_n)$.

Combining \eqref{IH} with the formula
\[
|q_n| = (|a_n| + y_{n - 1}\sgn(a_n)) |q_{n - 1}|
\]
demonstrates \eqref{qnanbounds}. Finally, the inequality $|a_n| \geq 2$ gives \eqref{expanding}.
\end{proof}

\section{Relation with Diophantine approximation}
Although the connection between the classical continued fraction expansion of a real number $x$ and the Diophantine properties of $x$ has been dealt with extensively in a number of places, the connection with the Hurwitz algorithm has not been stated precisely before. Many results can be proven simply from the identification of the Hurwitz convergent sequence with a subsequence of the classical convergent sequence, i.e. Theorem \ref{theorem1}. For brevity we do not list these here. One place where a difference does appear is in the basic estimates for the accuracy of the approximation of a convergent. In the classical setting, we have
\[
\frac{1}{(b_n + 2) q_{n - 1}^2} < \left|x - \frac{p_{n - 1}}{q_{n - 1}}\right| < \frac{1}{b_n q_{n - 1}^2}
\]
(e.g. this follows from \cite[Theorems 9 and 13]{Khinchin_book}). By contrast, in the Hurwitz setup we have:
\begin{proposition}
\label{propositionlagrangebound}
If $p_n/q_n$ denotes the $n$th convergent of the Hurwitz expansion of $x$, then
\[
\frac{1}{(|a_n| + (\phi - 0.5)) q_{n - 1}^2} < \left|x - \frac{p_{n - 1}}{q_{n - 1}}\right| < \frac{1}{(|a_n| - (2.5 - \phi)) q_{n - 1}^2}\cdot
\]
\end{proposition}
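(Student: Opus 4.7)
The plan is to establish an exact formula for $x - p_{n-1}/q_{n-1}$ in terms of the Hurwitz tail $x_{n+1}$ and the denominators, and then plug in the bound $|x_{n+1}| \geq 2$ together with the two-sided estimate on $|q_n|$ provided by Proposition~\ref{propositionexpanding}.

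First I would show, by induction on $n$ using Proposition~\ref{propositionrecursion}, that $x = (p_{n-1} x_n + p_{n-2})/(q_{n-1} x_n + q_{n-2})$; the argument is formally identical to the classical one. The determinant identity $p_{n-1} q_{n-2} - p_{n-2} q_{n-1} = (-1)^n$ follows from the same recursion by telescoping, so
\[
x - \frac{p_{n-1}}{q_{n-1}} = \frac{(-1)^{n+1}}{q_{n-1}(q_{n-1} x_n + q_{n-2})}.
\]
Substituting $x_n = a_n + 1/x_{n+1}$ and invoking $q_n = a_n q_{n-1} + q_{n-2}$ rewrites the denominator as $q_{n-1}(q_n + q_{n-1}/x_{n+1})$, so the task reduces to sandwiching $|q_n + q_{n-1}/x_{n+1}|$ between suitable multiples of $|q_{n-1}|$.

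For this I would combine two ingredients. From $|x_{n+1}| \geq 2$, the triangle inequality yields
\[
|q_n| - \tfrac{1}{2}|q_{n-1}| \;\leq\; |q_n + q_{n-1}/x_{n+1}| \;\leq\; |q_n| + \tfrac{1}{2}|q_{n-1}|,
\]
where the lower bound is positive since \eqref{expanding} gives $|q_n| > \phi |q_{n-1}| > |q_{n-1}|/2$. Substituting the bounds $(|a_n| - (2-\phi))|q_{n-1}| < |q_n| < (|a_n| + (\phi-1))|q_{n-1}|$ from \eqref{qnanbounds}, the arithmetic $(\phi - 1) + \tfrac{1}{2} = \phi - \tfrac{1}{2}$ and $(2 - \phi) + \tfrac{1}{2} = \tfrac{5}{2} - \phi$ produces exactly the two inequalities claimed. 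The only subtlety is that in the Hurwitz setup $q_n$ and $q_{n-1}$ may carry opposite signs (unlike in the classical case), so one cannot replace $|q_n + q_{n-1}/x_{n+1}|$ by $|q_n| + |q_{n-1}/x_{n+1}|$; however, since the triangle inequality is applied directly to the sum, this causes no real trouble, and I expect no other obstacle because the remaining work is a routine computation.
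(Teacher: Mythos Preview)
Your argument is correct and follows essentially the same route as the paper: both derive $q_{n-1}^2\,|x - p_{n-1}/q_{n-1}| = 1/|x_n - a_n + q_n/q_{n-1}|$ (your expression $|q_n + q_{n-1}/x_{n+1}|$ is the same thing after noting $1/x_{n+1} = x_n - a_n$ and factoring out $|q_{n-1}|$), then feed in $|x_n - a_n|\le 1/2$ together with \eqref{qnanbounds}. Your remark about possible opposite signs of $q_{n-1}$ and $q_n$ is a fair caution, but since you only use the triangle inequality it is indeed harmless.
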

\begin{proof}
By \cite[Theorem 5]{Khinchin_book}, we have
\[
x = \frac{x_n p_{n - 1} + p_{n - 2}}{x_n q_{n - 1} + q_{n - 2}},
\]
where $x_n$ is as in the definition of the Hurwitz algorithm, i.e.
\[
x_n = a_n + \cfrac{1}{a_{n + 1} + \ddots}\,\cdot
\]
Thus
\begin{align*}
q_{n - 1}^2 \left|x - \frac{p_{n - 1}}{q_{n - 1}}\right|
&= q_{n - 1} \left|\frac{(q_{n - 1} x_n p_{n - 1} + q_{n - 1} p_{n - 2}) - (p_{n - 1} x_n q_{n - 1} + p_{n - 1} q_{n - 2})}{x_n q_{n - 1} + q_{n - 2}}\right|\\
&= q_{n - 1} \frac{1}{|x_n q_{n - 1} + q_{n - 2}|} = \frac{1}{|x_n + q_{n - 2}/q_{n - 1}|} = \frac{1}{|x_n - a_n + q_n/q_{n - 1}|}\cdot
\end{align*}
Since $|x_n - a_n| \leq 1/2$, combining with \eqref{qnanbounds} completes the proof.
\end{proof}

\section{Comparison with Choudhuri and Dani (2015)}
In this section we show that by combining the results of previous sections in an appropriate way, we can strengthen a result of Choudhuri and Dani \cite{ChoudhuriDani}. We state and prove our theorem below and then show that it implies the main result of \cite{ChoudhuriDani}.


\begin{theorem}
\label{theorem2}
Let $(a_n)_0^\infty$ be the Hurwitz partial quotient sequence of a real number $x$, and fix $0 < \delta \leq 1/3$. For each $\rho > 0$, let
\begin{equation}
\label{Xrhodef}
X_\rho = \frac{\#\{(p,q)\in\mathbb Z^2 \text{ primitive} : 0 < q \leq \rho, \; |q(qx - p)| \leq \delta\}}{\log(\rho)}\cdot
\end{equation}
Then
\begin{align} \label{21}
\liminf_{\rho\to\infty} X_\rho &\geq \liminf_{n\to\infty} \frac{\#\big\{j = 1,\ldots,n : |a_{j + 1}| \geq \delta^{-1} + (2.5 - \phi)\big\}}{\sum_{j = 1}^n \log(|a_j| + (\phi - 1))}\\ \label{22}
\limsup_{\rho\to\infty} X_\rho &\leq \limsup_{n\to\infty} \frac{\#\big\{j = 1,\ldots,n : |a_{j + 1}| \geq \delta^{-1} - (\phi - 0.5)\big\}}{\sum_{j = 1}^n \log(|a_j| - (2 - \phi))}
\end{align}
\end{theorem}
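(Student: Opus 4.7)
The plan is to translate $X_\rho$ into a count indexed by the Hurwitz expansion, by combining Theorem \ref{theorem1}, Proposition \ref{propositionlagrangebound}, and the growth estimate \eqref{qnanbounds}. First I would identify the primitive pairs contributing to the numerator of $X_\rho$ with Hurwitz convergents: since $\delta \leq 1/3$, the hypothesis $|q(qx-p)| \leq \delta$ forces $|x - p/q| \leq 1/(3q^2)$, so by Theorem \ref{theorem1} such a pair is (up to a common sign) a Hurwitz convergent $(p_j, q_j)$, and conversely each Hurwitz convergent is primitive by the identity $p_j q_{j-1} - p_{j-1} q_j = \pm 1$. Thus, up to a $j = 0$ boundary term, the numerator of $X_\rho$ equals $\#\{j : |q_j| \leq \rho,\ |q_j(q_j x - p_j)| \leq \delta\}$.

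Next I would convert the approximation condition on each convergent into a condition on $|a_{j+1}|$ via Proposition \ref{propositionlagrangebound}: the inequality $|q_j(q_j x - p_j)| \leq \delta$ is implied by $|a_{j+1}| \geq \delta^{-1} + (2.5 - \phi)$ and implies $|a_{j+1}| \geq \delta^{-1} - (\phi - 0.5)$. Setting $n(\rho) = \max\{n : |q_n| \leq \rho\}$, this sandwiches the numerator of $X_\rho$ between $N_{n(\rho)}$ and $M_{n(\rho)} + O(1)$, where $N_n$ and $M_n$ denote the counting functions appearing in the numerators of \eqref{21} and \eqref{22}, respectively.

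For the denominator $\log \rho$ I would iterate \eqref{qnanbounds} (using $q_0 = 1$) to get
\[
B_n := \sum_{j=1}^n \log(|a_j| - (2 - \phi)) \; < \; \log|q_n| \; < \; \sum_{j=1}^n \log(|a_j| + (\phi - 1)) =: A_n,
\]
so that for $\rho \in [|q_n|, |q_{n+1}|)$ we have $B_n < \log\rho < A_{n+1}$. Combining with the previous sandwich gives $N_n / A_{n+1} < X_\rho \leq (M_n + O(1))/B_n$ on this $\rho$-interval. Taking $\limsup$ yields \eqref{22} at once. Taking $\liminf$ yields $\liminf_\rho X_\rho \geq \liminf_n N_n/A_{n+1} = \liminf_m N_{m-1}/A_m$; since $N_{m-1} \geq N_m - 1$ and $A_m \to \infty$ (because $|a_j| \geq 2$ forces $\log(|a_j| + (\phi - 1)) \geq 2\log\phi$), the right-hand side is bounded below by $\liminf_m N_m/A_m$, which is the right-hand side of \eqref{21}.

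The main (essentially the only) subtlety is this off-by-one shift in the denominator of the lower bound: for $\rho$ just below $|q_{n+1}|$, a large partial quotient $a_{n+1}$ can push $\log\rho$ above $A_n$ even though $p_{n+1}/q_{n+1}$ has not yet entered the count, so $A_n$ cannot directly replace $A_{n+1}$. The remedy is the reindexing $n \mapsto m = n+1$ together with the observation that $N$ increments by at most one per step, so that the discrepancy is a single $O(1/A_m)$ term that vanishes in the liminf.
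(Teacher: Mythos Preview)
Your proposal is correct and follows essentially the same approach as the paper: identify the primitive pairs in the numerator of $X_\rho$ with Hurwitz convergents via Theorem~\ref{theorem1}, sandwich the resulting count using Proposition~\ref{propositionlagrangebound}, and sandwich $\log|q_n|$ by iterating \eqref{qnanbounds}. The only difference is in the continuous-to-discrete reduction: the paper notes that the numerator of $X_\rho$ is constant on each interval $(|q_{n-1}|,|q_n|)$ and jumps by at most one at each endpoint, so that $\liminf_{\rho} X_\rho = \liminf_n X_{|q_n|}$ (and likewise for the $\limsup$), which lets one bound $X_{|q_n|}$ directly between $N_n/A_n$ and $M_n/B_n$ and thereby avoid the $A_{n+1}$ off-by-one shift you introduce and then undo.
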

\begin{proof}
By Theorem \ref{theorem1}, the condition $\delta \leq 1/3$ implies that the set appearing in \eqref{Xrhodef} contains only pairs $(p,q)$ such that $p/q$ is a convergent of the Hurwitz expansion of $x$. Thus the numerator of \eqref{Xrhodef} is constant with respect to $\rho$ along intervals of the form $(|q_{n - 1}|,|q_n|)$, and increases by at most 1 from $|q_n| - o(1)$ to $|q_n| + o(1)$. It follows that $\liminf_{\rho\to\infty} X_\rho = \liminf_{n\to\infty} X_{|q_n|}$, and similarly for the limsup. Now, applying Theorem \ref{theorem1} again, we have
\[
X_{|q_n|} = \frac{\#\{j = 1,\ldots,n : |q_j (q_j x - p_j)| \leq \delta\}}{\log|q_n|}\cdot
\]
To finish the proof, we have to bound this expression between the corresponding expressions in the right hand sides of \eqref{21} and \eqref{22}. And indeed, by Proposition \ref{propositionlagrangebound} we have
\[
|a_{j + 1}| \geq \delta^{-1} + (2.5 - \phi) \;\;\Rightarrow\;\; |q_j (q_j x - p_j)| \leq \delta \;\;\Rightarrow\;\; |a_{j + 1}| \geq \delta^{-1} - (\phi - 0.5)
\]
and thus
\begin{align*}
\#\{j = 1,\ldots,n : |a_{j + 1}| \geq \delta^{-1} + (2.5 - \phi)\}
&\leq \#\{j = 1,\ldots,n : |q_j (q_j x - p_j)| \leq \delta\}\\
&\leq \#\{j = 1,\ldots,n : |a_{j + 1}| \geq \delta^{-1} - (\phi - 0.5)\}.
\end{align*}
On the other hand, iterating \eqref{qnanbounds} and taking logarithms gives
\[
\sum_{j = 1}^n \log(|a_j| - (2 - \phi)) \leq \log|q_n| \leq \sum_{j = 1}^n \log(|a_j| + (\phi - 1))
\]
and dividing these two pairs of inequalities completes the proof.
\end{proof}


We now show that Theorem \ref{theorem2} implies the main result of \cite{ChoudhuriDani}. Since the statement of the main theorem of that paper contains a few inaccuracies, we state a corrected version here, which is equivalent to the version that appears in the authors' erratum (currently unpublished, but available from the authors upon request).

\begin{theorem}[Corrected version of {\cite[Theorem 1.1]{ChoudhuriDani}}]
Let $Q(p,q) = (aq + bp)(cq + dp)$ be a quadratic form, where $a,b,c,d\in \R$, $ad -bc = 1$, $b\neq 0$, and $\frac ab$ is irrational. Let $(a_n)_0^\infty$ be the Hurwitz partial quotient sequence of $\frac ab$. Let
\begin{align*}
\alpha^- &= \liminf_{n\to\infty} \frac{1}{n} \sum_{j = 1}^n \log|a_j|,&
\alpha^+ &= \limsup_{n\to\infty} \sum_{j = 1}^n \log|a_j|.
\end{align*}
For each $A > 0$ let
\begin{align*}
D^-(A) &= \liminf_{n\to\infty} \frac{1}{n} \#\{j = 1,\ldots,n : |a_{j + 1}| \geq A\}\\
 D^+(A) &= \limsup_{n\to\infty} \frac{1}{n} \#\{j = 1,\ldots,n : |a_{j + 1}| \geq A\}.
\end{align*}
Fix $0 < \delta < \frac 1\pi$, and let $e(\delta) = D^-(\delta^{-1} + 1)$ and $f(\delta) = D^+(\delta^{-1} - \frac 32)$. Let $\kappa > 0$ be fixed and for each $\rho > 0$ let
\[
G(\rho) = \{(p,q)\in \Z^2 \text{ primitive} : 0 < |Q(p,q)| < \delta, \;\; cq + dp > \kappa, \;\; \|(p,q)\| \leq \rho\}.
\]
Then we have the following:
\begin{itemize}
\item[(i)] if $\alpha^+ < \infty$ then there exists $\rho_0$ such that for all $\rho\geq \rho_0$ we have
\[
\#G(\rho) \geq \frac{e(\delta)}{\alpha^+ + 3} \log(\rho);
\]
\item[(ii)] Let $M = \max(\frac 14\log(\frac 95),\frac 18\alpha^-)$ if $\alpha^- < \infty$, and let $M < \infty$ be arbitrary if $\alpha^- = \infty$. Then for any $m > f(\delta)$, there exists $\rho_0$ such that for all $\rho \geq \rho_0$ we have
\[
\#G(\rho) \leq \frac{m}{M}\log(\rho).
\]
\end{itemize}
\end{theorem}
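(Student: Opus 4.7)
The plan is to match $G(\rho)$ with the counting set of Theorem~\ref{theorem2} up to asymptotically negligible distortion, and then read off the bounds \eqref{21}, \eqref{22} to obtain (i), (ii). Set $x = -a/b$, so that the Hurwitz partial quotients of $x$ have the same absolute values $|a_n|$ as those of $a/b$. Using $ad - bc = 1$ we factor
\[
Q(p,q) = (aq+bp)(cq+dp) = b(p - xq)(cq + dp),
\]
so $|Q(p,q)| = |b|\cdot |qx - p|\cdot|cq + dp|$. The constraint $cq + dp > \kappa > 0$ together with $|Q| < \delta$ forces $|qx - p| < \delta/(|b|\kappa)$, so along pairs in $G(\rho)$ with $|q|\to\infty$ the ratio $p/q$ tends to $x$; then $|cq+dp|/|q| \to 1/|b|$ and $\|(p,q)\|/|q| \to \sqrt{1+x^2}$. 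Since also $cq+dp \sim -q/b$, the condition $cq + dp > \kappa$ eventually pins the sign of $q$, and the involution $(p,q)\mapsto(-p,-q)$ converts the surviving sign to $q > 0$ without altering $|Q|$.

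Quantifying these asymptotics, for any $\varepsilon > 0$ and all sufficiently large $\rho$, every pair in $G(\rho)$ satisfies
\[
(1-\varepsilon)|q(qx-p)| \leq |Q(p,q)| \leq (1+\varepsilon)|q(qx-p)|, \qquad \|(p,q)\| = (1 + O(\varepsilon))\sqrt{1+x^2}\,|q|.
\]
Combining with Theorem~\ref{theorem1} (which identifies the admissible pairs as Hurwitz convergents of $x$), this gives the sandwich
\[
X_{\rho_-}(\delta_-)\log\rho_- - O(1) \;\leq\; \#G(\rho) \;\leq\; X_{\rho_+}(\delta_+)\log\rho_+ + O(1),
\]
where $\delta_\pm = \delta(1\pm O(\varepsilon))$, $\rho_\pm = C_\pm(\varepsilon)\,\rho$, and $X_\rho(\delta)$ is the quantity \eqref{Xrhodef}. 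Since $\log\rho_\pm = \log\rho + O(1)$, the $\liminf$ and $\limsup$ of $\#G(\rho)/\log\rho$ are controlled by those of $X_{\rho}(\delta_\mp)$, to which Theorem~\ref{theorem2} applies in the limit $\varepsilon \downarrow 0$.

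For (i), I apply \eqref{21} to $\delta_-$. Choosing $\varepsilon$ small so that $\delta_-^{-1} + (2.5 - \phi) < \delta^{-1} + 1$ (possible because $\phi > 3/2$), the numerator of \eqref{21} dominates $\#\{j \leq n : |a_{j+1}| \geq \delta^{-1} + 1\}$, whose liminf density is $e(\delta)$. For the denominator, since $|a_j|\geq 2$, the estimate $\log(|a_j| + (\phi-1)) \leq \log|a_j| + (\phi-1)/2$ gives $\limsup_n (1/n)\sum_j \log(|a_j| + (\phi-1)) \leq \alpha^+ + (\phi - 1)/2 < \alpha^+ + 3$, proving the claimed lower bound on $\#G(\rho)$.

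For (ii), I apply \eqref{22} to $\delta_+$, choosing $\varepsilon$ so that $\delta_+^{-1} - (\phi - 0.5) > \delta^{-1} - 3/2$ (possible because $\phi < 2$); the limsup of the numerator density is then at most $f(\delta)$. For the denominator, the bound $|a_j| - (2-\phi) \geq \phi$ yields $\sum_j \log(|a_j| - (2-\phi)) \geq n\log\phi$, and $\phi^4 = 3\phi + 2 > 9/5$ shows $\log\phi > \tfrac14 \log(9/5)$; separately, the ratio $\log(|a_j| - (2-\phi))/\log|a_j|$ is minimized at $|a_j| = 2$, where it equals $\log\phi/\log 2 > 1/2 > 1/8$, giving $\liminf_n (1/n)\sum_j \log(|a_j| - (2-\phi)) \geq \alpha^-/2 > \alpha^-/8$. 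Thus the denominator eventually exceeds $Mn$ under either definition of $M$, so $\limsup_\rho \#G(\rho)/\log\rho \leq f(\delta)/M$, from which (ii) follows for any $m > f(\delta)$. The main technical obstacle is arranging the $\varepsilon$-perturbations in the sandwich so that the fixed constants $+1$ and $-3/2$ in $e(\delta)$, $f(\delta)$ comfortably fit inside the slack windows $1 - (2.5-\phi)$ and $3/2 - (\phi-0.5)$.
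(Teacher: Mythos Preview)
Your proposal is correct and follows essentially the same route as the paper: set $x=-a/b$, use $ad-bc=1$ to get the asymptotic $|Q(p,q)|\sim|q(qx-p)|$ along approximating pairs, sandwich $\#G(\rho)$ between two instances of $X_\rho(\cdot)$ at slightly perturbed thresholds, and then invoke Theorem~\ref{theorem2} together with the slack $1>2.5-\phi$ and $\tfrac32>\phi-0.5$ to absorb the perturbation into the constants $e(\delta),f(\delta)$. The only cosmetic differences are that the paper writes the factorization as $Q(p,q)=(qx-p)(q+bd(qx-p))$ and fixes a single $\tilde\delta$ rather than a family $\delta_\pm(\varepsilon)$, and in (ii) it bounds the denominator by $\alpha^- - (2-\phi)$ and then checks numerically that $\log 2-(2-\phi)>\max(\tfrac14\log\tfrac95,\tfrac18\log 2)$, whereas you use the pointwise bounds $\log\phi>\tfrac14\log\tfrac95$ and $\log(t-(2-\phi))/\log t\geq \log\phi/\log 2>\tfrac18$; both arrive at the same conclusion.
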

\begin{proof}[Proof using Theorem \ref{theorem2}]
Let $x = -\frac ab$ and $y = bd$. Since $ad - bc = 1$, we have
\[
Q(p,q) = (qx - p)(q + y(qx - p))
\]
and thus
\begin{equation}
\label{Qpqasymp}
\lim_{\substack{(p,q) \in G(\infty) \\ \|(p,q)\| \to \infty}} \frac{|Q(p,q)|}{|q(qx - p)|} =
\lim_{\substack{(p,q)\in \Z^2 \\ |q(qx - p)| \leq 1 \\ q\to\infty}} \frac{|Q(p,q)|}{|q(qx - p)|} =
1.
\end{equation}
Moreover, the Hurwitz partial quotient sequence of $x$ is the same as the Hurwitz partial quotient sequence of $\frac ab$ except for minus signs.

Fix $0 < \delta < \frac 1\pi$. We prove (i) and (ii):
\begin{itemize}
\item[(i)] Since $1 > 2.5 - \phi$, there exists $0 < \w\delta < \delta < 1/\pi < 1/3$ such that $\delta^{-1} + 1 \geq \w\delta^{-1} + (2.5 - \phi)$. It follows that
\[
e(\delta) = D^-(\delta^{-1} + 1) \leq D^-(\w\delta^{-1} + (2.5 - \phi)).
\]
Now by \eqref{Qpqasymp}, we have
\[
X_\rho(\w\delta) \leq \#G(\rho) + C
\]
for some constant $C$ depending on $\delta$ and $\w\delta$. Thus if $\alpha^+ < \infty$, then
\begin{align*}
\liminf_{\rho\to\infty} \frac{\#G(\rho)}{\log(\rho)}
&\geq \liminf_{\rho\to\infty} X_\rho(\w\delta)
\underset{\eqref{21}}{\geq} \frac{D^-(\w\delta^{-1} + (2.5 - \phi))}{\alpha^+ + (\phi - 1)}
> \frac{e(\delta)}{\alpha^+ + 3},
\end{align*}
which implies (i).
\item[(ii)] Since $\frac{3}{2} > \phi - 0.5$, there exists $0 < \delta < \w\delta < 1/\pi < 1/3$ such that $\delta^{-1} - \frac{3}{2} \leq \w\delta^{-1} - (\phi - 0.5)$. It follows that
\[
f(\delta) = D^+(\delta^{-1} - \tfrac{3}{2}) \geq D^+(\w\delta^{-1} - (\phi - 0.5)).
\]
Now by \eqref{Qpqasymp}, we have
\[
X_\rho(\w\delta) \geq \#G(\rho) - C
\]
for some constant $C$ depending on $\delta$ and $\w\delta$. Thus
\begin{align*}
\liminf_{\rho\to\infty} \frac{\#G(\rho)}{\log(\rho)}
&\leq \liminf_{\rho\to\infty} X_\rho(\w\delta)
\underset{\eqref{22}}{\leq} \frac{D^+(\w\delta^{-1} - (\phi - 0.5))}{\alpha^- - (2 - \phi)}
\leq \frac{f(\delta)}{\max(\frac 14 \log(\frac 95),\frac 18 \alpha^-)},
\end{align*}
which implies (ii). In the last inequality, we have used the bound
\[
\alpha^- - (2 - \phi) \geq \max\left(\tfrac 14 \log(\tfrac 95),\tfrac 18 \alpha^-\right),
\]
which follows from the fact that $\alpha^- \geq \log(2)$ (cf. Proposition \ref{propositioncharacterization}) together with the numerical bound
\[
\log(2) - (2 - \phi) > \max\left(\tfrac 14 \log(\tfrac 95),\tfrac 18 \log(2)\right).
\qedhere\]
\end{itemize}
\end{proof}

{\bf Acknowledgements.} The author was supported by the EPSRC Programme Grant EP/J018260/1. The author thanks the anonymous referee for helpful comments.

\bibliographystyle{amsplain}

\bibliography{bibliography}

\providecommand{\bysame}{\leavevmode\hbox to3em{\hrulefill}\thinspace}
\providecommand{\MR}{\relax\ifhmode\unskip\space\fi MR }
\providecommand{\MRhref}[2]{%
  \href{http://www.ams.org/mathscinet-getitem?mr=#1}{#2}
}
\providecommand{\href}[2]{#2}
\begin{thebibliography}{1}

\bibitem{ChoudhuriDani}
Manoj Choudhuri and S.~G. Dani, \emph{On values of binary quadratic forms at
  integer points}, Math. Res. Lett. \textbf{22} (2015), no.~4, 1023--1045.
  \MR{3391875}

\bibitem{Hensley_Hurwitz}
D.~Hensley, \emph{The {H}urwitz complex continued fraction},
  \url{http://www.math.tamu.edu/~doug.hensley/SanAntonioShort.pdf}, preprint
  2006.

\bibitem{Hensley_book}
Doug Hensley, \emph{Continued fractions}, World Scientific Publishing Co. Pte.
  Ltd., Hackensack, NJ, 2006. \MR{2351741 (2009a:11019)}

\bibitem{Hurwitz_CF}
A.~Hurwitz, \emph{\"{U}ber die {E}ntwicklung complexer {G}r\"ossen in
  {K}ettenbr\"uche}, Acta Math. \textbf{11} (1887), no.~1-4, 187--200.
  \MR{1554754}

\bibitem{KatokUgarcovici}
Svetlana Katok and Ilie Ugarcovici, \emph{Arithmetic coding of geodesics on the
  modular surface via continued fractions}, European women in
  mathematics---{M}arseille 2003, CWI Tract, vol. 135, Centrum Wisk. Inform.,
  Amsterdam, 2005, pp.~59--77. \MR{2223106 (2006k:37075)}

\bibitem{Khinchin_book}
A.~Y. Khinchin, \emph{Continued fractions}, The University of Chicago Press,
  Chicago, Ill.-London, 1964.

\end{thebibliography}

\end{document}